\theoremstyle{plain}
\newtheorem{Thm}{Theorem}[section]
\newtheorem{Lem}[Thm]{Lemma}
\newtheorem{Prop}[Thm]{Proposition}
\theoremstyle{definition}
\newtheorem{Ex}[Thm]{Example}
\newtheorem*{Rem}{Remark}
\newcommand{\ex}{\mathrm{ex}} 
\newcommand{\Max}{\mathbf{Max}} 
\title{On the rank functions of $\mathcal{H}$-matroids}
\author{
{\sc Yoshio SANO}
\footnote{Division of Information Engineering, 
Faculty of Engineering, Information and Systems, 
University of Tsukuba, 
Ibaraki 305-8573, Japan. 
\textit{E-mail:} \texttt{sano@cs.tsukuba.ac.jp}}
}
\date{}
\begin{document}

\maketitle

\begin{abstract}
The notion of $\mathcal{H}$-matroids was introduced 
by U. Faigle and S. Fujishige 
in 2009 as a general model for matroids and the greedy algorithm. 
They gave a characterization of $\mathcal{H}$-matroids 
by the greedy algorithm. 
In this note, we give a characterization of some 
$\mathcal{H}$-matroids by rank functions. 
\end{abstract}

{\bf Keywords:} 
Matroid, $\mathcal{H}$-Matroid, Simplicial complex, Rank function

{\bf 2010 Mathematics Subject Classification:} 05B35, 90C27 

\section{Introduction and Main Result} 

The notion of matroids was introduced by H. Whitney \cite{Whitney} in 1935  
as an abstraction of the notion of linear independence in a vector space. 
Many researchers have studied and extended the theory of matroids 
(cf. \cite{F-book, KLS, Oxley, Schrijver, Welsh}). 
In 2009, 
U. Faigle and S. Fujishige \cite{FF} 
introduced the notion of $\mathcal{H}$-matroids 
as a general model for matroids and the greedy algorithm. 
They gave a characterization of $\mathcal{H}$-matroids 
by the greedy algorithm. 
In this note, we give a characterization of 
the rank functions of $\mathcal{H}$-matroids 
that are \emph{simplicial complexes},  
for {\it any} family $\mathcal{H}$. 
Our main result is as follows. 

\begin{Thm}\label{th:main}
Let $E$ be a finite set and 
let $\rho:2^E \to \mathbb{Z}_{\geq 0}$ be a set function on $E$. 
Let $\mathcal{H}$ be a family of subsets of $E$ 
with $\emptyset, E \in \mathcal{H}$. 
Then, $\rho$ is the rank function of an $\mathcal{H}$-matroid 
$(E,\mathcal{I})$ 
if and only if $\rho$ is a 
normalized unit-increasing function 
satisfying the $\mathcal{H}$-extension property. 
\begin{itemize}
\item[{\rm (E)}] {\rm ($\mathcal{H}$-extension property)} \\
For $X \in 2^E$ and $H \in \mathcal{H}$ with $X \subseteq H$, 
if $\rho(X)=|X|<\rho(H)$, \\
then there exists $e \in H \setminus X$ such that 
$\rho(X \cup \{e\})=\rho(X)+1$. 
\end{itemize}
Moreover, if $\rho$ is a 
normalized unit-increasing set function on $E$ 
satisfying the $\mathcal{H}$-extension property 
and $\mathcal{I}:=\{X \in 2^E \mid \rho(X)=|X| \}$, 
then $(E,\mathcal{I})$ is an $\mathcal{H}$-matroid with rank function $\rho$ 
and $\mathcal{I}$ is a simplicial complex. 
\end{Thm}

This note is organized as follows. 
Section 2 gives some definitions and preliminaries 
on $\mathcal{H}$-matroids. 
In Section 3, we give a proof of Theorem \ref{th:main} 
and 
an example which shows 
$\mathcal{H}$-matroids that are not simplicial complexes 
are not 
characterized 
only 
by their rank functions. 

\section{Preliminaries}

Let $E$ be a nonempty finite set 
and let $2^E$ denote the family of all subsets of $E$. 
For any family $\mathcal{I}$ of subsets of $E$, 
the \textit{extreme-point operator} $\ex_{\mathcal{I}}:\mathcal{I} \to 2^E$ 
and the \textit{co-extreme-point operator} $\ex^*_{\mathcal{I}}:\mathcal{I} \to 2^E$ 
associated with $\mathcal{I}$ are defined 
as follows: 
\begin{eqnarray*}
\ex_{\mathcal{I}}(I) &:=& 
\{ e \in I \mid I \setminus \{e\} \in \mathcal{I} \} \quad 
(I \in \mathcal{I}), \\
\ex^*_{\mathcal{I}}(I) &:=& 
\{ e\in E \setminus I \mid I \cup \{e\} \in \mathcal{I} \} \quad 
(I \in \mathcal{I}). 
\end{eqnarray*}
For any family $\mathcal{I} \subseteq 2^E$, 
we denote the set of maximal elements of $\mathcal{I}$ 
with respect to set inclusion 
by $\Max(\mathcal{I})$. 

Let $\mathcal{I}$ be a nonempty family of subsets of a finite set $E$. 
The family $\mathcal{I}$ is called \emph{constructible} if it satisfies 
\begin{itemize}
\item[{\rm (C)}] 
$\ex_{\mathcal{I}}(I) \neq \emptyset$ 
\quad for all $I \in \mathcal{I} \setminus \{\emptyset\}$. 
\end{itemize}

\noindent
Note that {\rm (C)} implies $\emptyset \in \mathcal{I}$. 
We call $I \in \mathcal{I}$ a \emph{base} of $\mathcal{I}$ 
if $\ex^*_{\mathcal{I}}(I) =\emptyset$. 
We denote by $\mathcal{B}(\mathcal{I})$ the family of bases of $\mathcal{I}$, i.e., 
$\mathcal{B}(\mathcal{I}):=\{I \in \mathcal{I} \mid \ex^*_{\mathcal{I}}(I) = \emptyset \}$. 
By definition, it holds that $\mathcal{B}(\mathcal{I}) \supseteq \Max(\mathcal{I})$. 

A constructible family $\mathcal{I}$ induces 
a \emph{(base) rank function} $\rho:2^E \to \mathbb{Z}_{\geq 0}$ via
\[
\rho (X) = \textrm{max}_{ B \in \mathcal{B}(\mathcal{I})} |X \cap B| 
= \textrm{max}_{ I \in \mathcal{I}} |X \cap I| 
= \textrm{max}_{I \in \Max(\mathcal{I})} |X \cap I|. 
\]

The following is easily verified by definitions. 

\begin{Lem}\label{lem:UI}
The rank function $\rho$ of a constructible family 
is normalized (i.e. $\rho(\emptyset) = 0$) 
and satisfies the unit-increase property 
\begin{itemize}
\item[{\rm (UI)}] 
$\rho(X) \leq \rho(Y) \leq \rho(X) +|Y \setminus X|$ 
\qquad for all $X \subseteq Y \subseteq E$.
\end{itemize}
\end{Lem}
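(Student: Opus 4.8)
The plan is to argue directly from the base rank formula $\rho(X) = \max_{I \in \cI}|X \cap I|$, which is well-defined because a constructible family is nonempty and $E$ is finite; I will use this expression throughout and will not need the bases formulation, nor in fact any consequence of constructibility beyond nonemptiness of $\cI$. Normalization is immediate: since $\emptyset \cap I = \emptyset$ for every $I \in \cI$, we get $\rho(\emptyset) = \max_{I \in \cI} 0 = 0$.

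For the two inequalities comprising (UI), fix $X \subseteq Y \subseteq E$. For the lower bound $\rho(X) \le \rho(Y)$, I would choose $I_0 \in \cI$ attaining $\rho(X) = |X \cap I_0|$; since $X \subseteq Y$ we have $X \cap I_0 \subseteq Y \cap I_0$, whence $\rho(X) = |X \cap I_0| \le |Y \cap I_0| \le \rho(Y)$, the last step merely recording that $I_0$ is one competitor in the maximum defining $\rho(Y)$.

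For the upper bound $\rho(Y) \le \rho(X) + |Y \setminus X|$, the single point worth isolating is a disjoint decomposition. Choose $I_1 \in \cI$ attaining $\rho(Y) = |Y \cap I_1|$. Because $X \subseteq Y$, the set $Y \cap I_1$ is the disjoint union $(X \cap I_1) \cup ((Y \setminus X) \cap I_1)$, so $|Y \cap I_1| = |X \cap I_1| + |(Y \setminus X) \cap I_1|$. Bounding the first summand by $\rho(X)$ (as $I_1$ is a competitor for $X$) and the second by $|Y \setminus X|$ gives $\rho(Y) \le \rho(X) + |Y \setminus X|$.

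I do not anticipate a genuine obstacle; this is precisely the sort of statement that is, as the text says, \emph{easily verified by definitions}. The only places to stay alert are ensuring the maximum ranges over a nonempty index set (guaranteed by nonemptiness of $\cI$, itself forced by (C)) and remembering to select a \emph{separate} maximizer for each of $\rho(X)$ and $\rho(Y)$, rather than attempting to use a single $I \in \cI$ for both inequalities.
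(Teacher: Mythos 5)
Your proof is correct and is exactly the routine verification the paper has in mind: the paper omits the argument entirely (stating only that the lemma ``is easily verified by definitions''), and your direct computation from $\rho(X)=\max_{I\in\cI}|X\cap I|$ --- normalization via $\emptyset\cap I=\emptyset$, monotonicity via a maximizer for $X$, and the upper bound via the disjoint decomposition $Y\cap I_1=(X\cap I_1)\cup((Y\setminus X)\cap I_1)$ --- fills that gap in the intended way, including the correct attention to choosing separate maximizers.
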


\noindent
Remark that, by putting $X = \emptyset$, we obtain  
\begin{itemize}
\item[{\rm (UI)$'$}] 
$0 \leq \rho(Y) \leq |Y|$ 
\qquad for all $Y \subseteq E$.
\end{itemize}

The \emph{restriction} of $\mathcal{I}$ to a subset $A \in 2^E$ is the family 
$\mathcal{I}^{(A)} := \{I \in \mathcal{I} \mid I \subseteq A \}$. 
Note that every restriction of a constructible family is constructible.

A \emph{simplicial complex}
is a family $\mathcal{I} \subseteq 2^E$ 
such that 
$X \subseteq I \in \mathcal{I}$ 
implies $X \in \mathcal{I}$. 
We can easily check the following lemmas on simplicial complexes.  

\begin{Lem}\label{lem:SCchara}
A family $\mathcal{I} \subseteq 2^E$ is a simplicial complex 
if and only if 
$\ex_{\mathcal{I}}(I) = I$ holds for any $I \in \mathcal{I}$. 
\end{Lem}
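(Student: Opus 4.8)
The plan is to prove both implications, observing first that $\ex_{\cI}(I) \subseteq I$ holds automatically for every $I \in \cI$ by the very definition of the extreme-point operator, since $\ex_{\cI}(I)$ consists of elements of $I$. Consequently the asserted equality $\ex_{\cI}(I) = I$ is equivalent to the reverse inclusion $I \subseteq \ex_{\cI}(I)$, that is, to the statement that $I \setminus \{e\} \in \cI$ for every $e \in I$. This reformulation makes both directions transparent.

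For the forward direction, I would suppose $\cI$ is a simplicial complex and fix $I \in \cI$ together with an arbitrary $e \in I$. Then $I \setminus \{e\} \subseteq I \in \cI$, so the defining property of a simplicial complex gives $I \setminus \{e\} \in \cI$; hence $e \in \ex_{\cI}(I)$. As $e$ was arbitrary, $I \subseteq \ex_{\cI}(I)$, and combined with the automatic inclusion above this yields $\ex_{\cI}(I) = I$.

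For the backward direction, I would assume $\ex_{\cI}(I) = I$ for every $I \in \cI$ and take any $X \subseteq I \in \cI$, aiming to conclude $X \in \cI$. The hypothesis only supplies closure under removal of a single element, so the key step is to bridge this to closure under passage to an arbitrary subset, which I would carry out by induction on $|I \setminus X|$. In the base case $|I \setminus X| = 0$ we have $X = I \in \cI$. For the inductive step, choose some $e \in I \setminus X$; since $e \in I = \ex_{\cI}(I)$, the set $I' := I \setminus \{e\}$ lies in $\cI$. Because $e \notin X$ we still have $X \subseteq I'$, while $|I' \setminus X| = |I \setminus X| - 1$, so the induction hypothesis applies to $X \subseteq I' \in \cI$ and delivers $X \in \cI$.

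The only genuinely nontrivial point is this backward induction, which converts the single-step removal guaranteed by $\ex_{\cI}(I) = I$ into the full downward closure defining a simplicial complex; the forward direction and the preliminary inclusion $\ex_{\cI}(I) \subseteq I$ are both immediate from the definitions.
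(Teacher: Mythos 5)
Your proof is correct: the paper leaves this lemma as an easy verification (it is stated with no written proof), and your argument---the forward direction being immediate from downward closure, and the backward direction converting single-element removal into full downward closure by induction on $|I \setminus X|$---is exactly the standard check the paper intends.
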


\begin{proof}
The lemma follows from the definitions 
of a simplicial complex and $\ex_{\mathcal{I}}(\cdot)$.
\end{proof}

\begin{Lem}\label{lem:sc-abc}
Let $\mathcal{I} \subseteq 2^E$ be a simplicial complex 
and let $X \in 2^E$. Then, 
\begin{itemize}
\item[{\rm (a)}]
$\mathcal{B}(\mathcal{I}) = \Max(\mathcal{I})$. 
\item[{\rm (b)}]
For $X \in 2^E$, 
$X \in \mathcal{I}$ if and only if $\rho(X)=|X|$. 
\item[{\rm (c)}]
For $H \in 2^E$, 
the family 
$\mathcal{I}^{(H)} \subseteq 2^H$ 
is a simplicial complex. 
\end{itemize}
\end{Lem}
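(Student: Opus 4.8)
The three items (a)--(c) are all immediate consequences of the defining downward-closure property of a simplicial complex, combined with the definitions of bases, maximal sets, and the rank function $\rho$. The plan is to establish them in the order stated, since (b) uses nothing beyond the definition of $\rho$ and (a), (c) use only set-theoretic closure arguments. I expect no genuine obstacle; the only point requiring a moment's thought is that (a) needs just one inclusion, because the reverse inclusion is already supplied in the text.

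For (a), I would recall that the remark following the definition of $\cB(\cI)$ gives $\cB(\cI) \supseteq \Max(\cI)$ for \emph{any} family, so it suffices to prove $\cB(\cI) \subseteq \Max(\cI)$. Take $B \in \cB(\cI)$, i.e.\ $\ex^*_{\cI}(B) = \emptyset$. If $B$ were not maximal, there would exist $J \in \cI$ with $B \subsetneq J$; choosing any $e \in J \setminus B$, the inclusion $B \cup \{e\} \subseteq J \in \cI$ together with downward closure forces $B \cup \{e\} \in \cI$, so $e \in \ex^*_{\cI}(B)$, contradicting $\ex^*_{\cI}(B) = \emptyset$. Hence $B \in \Max(\cI)$, which gives the desired inclusion and thus the equality.

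For (b), I would first note the universally valid bound $\rho(X) = \max_{I \in \cI}|X \cap I| \leq |X|$, since $|X \cap I| \leq |X|$ for every $I$. If $X \in \cI$, taking $I = X$ in the maximum yields $\rho(X) \geq |X \cap X| = |X|$, so $\rho(X) = |X|$. Conversely, if $\rho(X) = |X|$, then some $I \in \cI$ satisfies $|X \cap I| = |X|$; since $X \cap I \subseteq X$, this forces $X \cap I = X$, i.e.\ $X \subseteq I$, and downward closure of $\cI$ gives $X \in \cI$. For (c), I would argue directly from the definition $\cI^{(X)} = \{I \in \cI \mid I \subseteq X\}$: if $A \subseteq B$ with $B \in \cI^{(X)}$, then $B \in \cI$ and $B \subseteq X$, so downward closure of $\cI$ gives $A \in \cI$, while $A \subseteq B \subseteq X$ gives $A \subseteq X$; hence $A \in \cI^{(X)}$, proving $\cI^{(X)}$ is a simplicial complex.
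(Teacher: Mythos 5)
Your proof is correct: all three verifications (the contradiction argument via $\ex^*_{\cI}$ for (a), the bound $\rho(X)\leq|X|$ plus downward closure for (b), and the direct closure check for (c)) are exactly the routine arguments the paper has in mind when it states this lemma with the proof omitted as ``easily checked.'' There is nothing to add.
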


\begin{proof}
(a): 
Suppose that there exists an element 
$B \in \mathcal{B}(\mathcal{I}) \setminus \Max(\mathcal{I})$. 
Then, since $B$ is not maximal in $\mathcal{I}$, 
there exists $I \in \mathcal{I}$ such that 
$B \subsetneq I$. 
For any $e \in I \setminus B$, we have 
$B \cup \{e\} \in \mathcal{I}$ since $B \cup \{e\} \subseteq I$ 
and $\mathcal{I}$ is a simplicial complex. 
Therefore $e \in \ex^*_{\mathcal{I}}(B)$. 
But this is a contradiction to 
$B \in \mathcal{B}(\mathcal{I})$. 

(b):
If $X \in \mathcal{I}$, then $\rho(X)=\max_{I \in \mathcal{I}} |X \cap I|=|X|$. 
Take $X \in 2^E$ with $\rho(X)=|X|$. 
Then there exists $I \in \mathcal{I}$ 
such that $|X \cap I|=\rho(X)=|X|$. 
Therefore, $X \subseteq I$. 
Since $\mathcal{I}$ is a simplicial complex, 
we have $X \in \mathcal{I}$. 

(c): 
Take any $X \in 2^H$ and 
$I \in \mathcal{I}^{(H)} := \{I \in \mathcal{I} \mid I \subseteq H \}$ 
with $X \subseteq I$. 
Since $\mathcal{I}$ is a simplicial complex, $X \in \mathcal{I}$. 
Since $X \subseteq H$, we have $X \in \mathcal{I}^{(H)}$. 
\end{proof}

We now recall the definitions of 
an \emph{$\mathcal{H}$-independence system}
and an \emph{$\mathcal{H}$-matroid}, 
which were introduced by Faigle and Fujishige \cite{FF}. 
Let $E$ be a finite set and 
let $\mathcal{H}$ be a family of subsets of $E$ 
with $\emptyset, E \in \mathcal{H}$. 
A constructible family $\mathcal{I} \subseteq 2^E$ is called 
an \emph{$\mathcal{H}$-independence system} if
\begin{itemize}
\item[{\rm (I)}] 
for all $H \in \mathcal{H}$, 
there exists $I \in \mathcal{I}^{(H)}$ 
such that $|I|=\rho(H)$. 
\end{itemize}

\noindent
An \emph{$\mathcal{H}$-matroid} is a pair $(E,\mathcal{I})$ 
of the set $E$ 
and an $\mathcal{H}$-independence system $\mathcal{I}$ 
satisfying the following property: 
\begin{itemize}
\item[(M)] 
for all $H \in \mathcal{H}$, 
all the bases $B$ of 
$\mathcal{I}^{(H)}$ 
have the same cardinality $|B|= \rho(H)$. 
\end{itemize}

\section{Proof of Theorem 1.1}

First, we see an example which shows that 
$\mathcal{H}$-matroids that are not simplicial complexes 
are not characterized by their rank functions. 

\begin{Ex}
Let $E = \{1,2,3\}$ and $\mathcal{H}=\{\emptyset, E\}$. 
Let 
\begin{eqnarray*}
\mathcal{I}_1 &=& \{\emptyset, \{2\}, \{1,2\}, \{2,3\} \}, \\
\mathcal{I}_2 &=& \{\emptyset, \{1\}, \{3\}, \{1,2\}, \{2,3\} \}, \\ 
\mathcal{I}_3 &=& \{\emptyset, \{1\}, \{2\}, \{3\}, \{1,2\}, \{2,3\} \}. 
\end{eqnarray*}
Then $(E,\mathcal{I}_1)$, $(E,\mathcal{I}_2)$, and $(E,\mathcal{I}_3)$ are 
$\mathcal{H}$-matroids with the same rank function $\rho:2^E \to \mathbb{Z}_{\geq 0}$ 
such that $\rho(\emptyset)=0$, 
$\rho(\{1\}) = \rho(\{2\}) = \rho(\{3\}) = \rho(\{1,3\})=1$, and 
$\rho(\{1,2\}) = \rho(\{2,3\}) = \rho(\{1,2,3\})=2$. 
\end{Ex}

\noindent
Therefore, we cannot distinguish $\mathcal{H}$-matroids in general 
by their rank functions. 
More generally, the following holds. 

\begin{Prop}\label{prop:same}
For any constructible families $\mathcal{I}$ and $\mathcal{I}'$ 
with $\Max(\mathcal{I})=\Max(\mathcal{I}')$, 
the rank function $\rho'$ associated with $\mathcal{I}'$ 
coincides with the rank function $\rho$ associated with $\mathcal{I}$. 
\end{Prop}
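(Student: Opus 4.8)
The plan is to show that the rank function of a constructible family depends only on its maximal elements, and then read off the proposition at once. Concretely, I would first establish, for an arbitrary constructible family $\cI$, the identity
\[
\rho(X) = \max_{M \in \Max(\cI)} |X \cap M| \qquad (X \in 2^E).
\]
Granting this, the result is immediate: the hypothesis $\Max(\cI) = \Max(\cI')$ makes the right-hand side literally the same expression for $\cI$ and for $\cI'$, so $\rho(X) = \rho'(X)$ for every $X \in 2^E$, which is the desired coincidence of rank functions.

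To prove the displayed identity I would start from the defining formula $\rho(X) = \max_{I \in \cI} |X \cap I|$ and compare the two maxima. One inequality is trivial: since $\Max(\cI) \subseteq \cI$, the maximum of $|X \cap M|$ over $M \in \Max(\cI)$ is at most the maximum of $|X \cap I|$ over $I \in \cI$, i.e.\ at most $\rho(X)$. For the reverse inequality I would use that every $I \in \cI$ is contained in some maximal member $M \in \Max(\cI)$; then $I \subseteq M$ forces $|X \cap I| \leq |X \cap M| \leq \max_{M' \in \Max(\cI)} |X \cap M'|$. Taking the maximum over $I \in \cI$ on the left yields $\rho(X) \leq \max_{M \in \Max(\cI)} |X \cap M|$, and combining the two bounds gives equality.

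The only point needing genuine justification—and the step I expect to be the main, if modest, obstacle—is the claim that each $I \in \cI$ admits a maximal superset in $\cI$. This rests on finiteness: $\cI$ is a nonempty family of subsets of the finite set $E$, so any chain in $\cI$ ascending from $I$ must terminate, and its top element lies in $\Max(\cI)$. With this existence statement in hand, the remainder is purely the two-sided comparison of maxima described above, and no further properties of constructible families (beyond $\cI$ being a nonempty subfamily of $2^E$) are required.
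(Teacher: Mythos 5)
Your argument is correct and is essentially the paper's own proof: the paper likewise passes from $\max_{I \in \cI}|X\cap I|$ to $\max_{I \in \Max(\cI)}|X\cap I|$, applies the hypothesis $\Max(\cI)=\Max(\cI')$, and returns to $\max_{I\in\cI'}|X\cap I|$. You merely spell out the justification (every member of a finite nonempty family lies below a maximal one) that the paper leaves implicit.
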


\begin{proof} 
For any $X \in 2^E$, 
it holds that
\[ 
\rho(X)
=\textrm{max}_{I \in \Max(\mathcal{I})} |X \cap I| 
= \textrm{max}_{I \in \Max(\mathcal{I}')} |X \cap I| 
=\rho'(X)
\]
since $\Max(\mathcal{I}) = \Max(\mathcal{I}')$. 
\end{proof}

In the following, we give a proof of Theorem \ref{th:main}. 

\begin{Lem}\label{clm:same}
For any constructible family, 
there exists a simplicial complex 
such that 
their rank functions are the same. 
\end{Lem}

\begin{proof}
Let $\mathcal{I} \subseteq 2^E$ be a constructible family. 
Define $\mathcal{I}':=\{X \in 2^E \mid X \subseteq I 
\text{ for some } I \in \mathcal{I} \}$. 
Then it is clear that $\mathcal{I}'$ is a simplicial complex. 
Obviously each $Y \in \Max(\mathcal{I})$ is maximal 
in $\mathcal{I}'$, and $\mathcal{I}'$ 
does not have new maximal members. 
Therefore $\Max(\mathcal{I}) = \Max(\mathcal{I}')$. 
Note that any simplicial complex is a constructible family. 
By Proposition \ref{prop:same}, 
the rank functions of $\mathcal{I}$ and $\mathcal{I}'$ are the same. 
\end{proof}

\begin{Lem}\label{clm:ExProp}
Let $\rho:2^E \to \mathbb{Z}_{\geq 0}$ be 
the rank function of an $\mathcal{H}$-matroid $(E,\mathcal{I})$, 
where $\mathcal{I}$ is a simplicial complex. 
Then $\rho$ satisfies the $\mathcal{H}$-extension property. 
\end{Lem}

\begin{proof}
Take $X \in 2^E$ and $H \in \mathcal{H}$ with $X \subseteq H$, 
and suppose that $\rho(X)=|X|<\rho(H)$. 
By Lemma \ref{lem:sc-abc} (c), $\mathcal{I}^{(H)}$ is a simplicial complex 
since $\mathcal{I}$ is a simplicial complex. 
Note that $\mathcal{B}(\mathcal{I}^{(H)})=\Max(\mathcal{I}^{(H)})$ 
by Lemma \ref{lem:sc-abc} (a). 
By Lemma \ref{lem:sc-abc} (b), $X \in \mathcal{I}$. 
Therefore $X \in \mathcal{I}^{(H)}$, 
and $X$ is not a base of $\mathcal{I}^{(H)}$ by (I) and (M) 
since $\rho(X) < \rho(H)$. 
Thus there exists $B \in \mathcal{I}$ such that 
$X \subsetneq B \subseteq H$ and $|B|=\rho(H)$. 
Take any element $e \in B \setminus X \subseteq H \setminus X$. 
Then $X \cup \{e\} \in \mathcal{I}$ 
since $X \cup \{e\} \subseteq B \in \mathcal{I}$ 
and 
$\mathcal{I}$ is a simplicial complex. Hence 
it follows 
that $\rho(X \cup \{e\})=|X \cup \{e\}| =|X|+1=\rho(X)+1$. 
\end{proof}

\begin{Lem}\label{clm:core}
Let $\rho:2^E \to \mathbb{Z}_{\geq 0}$ be 
a normalized unit-increasing function 
satisfying the $\mathcal{H}$-extension property 
for some family $\mathcal{H} \subseteq 2^E$ 
with $\emptyset,E \in \mathcal{H}$. 
Put 
\[
\mathcal{I}_{\rho}:=\{X \in 2^E \mid \rho(X)=|X| \}. 
\]
Then $(E,\mathcal{I}_{\rho})$ is an $\mathcal{H}$-matroid and $\mathcal{I}_{\rho}$ 
is a simplicial complex. 
\end{Lem}

\begin{proof}
First we show that $\mathcal{I}_{\rho}$ is a simplicial complex. 
Take any $I \in \mathcal{I}_{\rho} \setminus \{\emptyset\}$ 
and any $e \in I$. 
Then we have $\rho(I)=|I|$. 
Since $\rho$ is unit-increasing, 
we have 
$\rho(I) \leq 
\rho(I \setminus \{e\}) +1$ 
and thus $\rho(I \setminus \{e\}) \geq \rho(I)-1 =|I|-1
=|I \setminus \{e\}|$. 
By (UI) and $\rho(\emptyset)=0$, we also have  
$\rho(I \setminus \{e\}) \leq 
0+ |I \setminus \{e\}|$ 
and thus $\rho(I \setminus \{e\}) \leq |I \setminus \{e\}|$. 
Therefore we have $\rho(I \setminus \{e\})= |I \setminus \{e\}|$ 
and thus $I \setminus \{e\} \in \mathcal{I}_{\rho}$. 
By Lemma \ref{lem:SCchara}, 
$\mathcal{I}_{\rho}$ is a simplicial complex. 
Hence it follows from definitions that 
$\mathcal{I}_{\rho}$ satisfies (C) and (I). 

Now we show that $\mathcal{I}_{\rho}$ satisfies (M). 
Take any $H \in \mathcal{H}$. 
Suppose that there exist $B_1,B_2 \in \mathcal{B}(\mathcal{I}_{\rho}^{(H)})$ 
such that $|B_1| \neq |B_2|$. 
Without loss of generality, we may assume that $|B_1|<|B_2| \leq \rho(H)$. 
Note that $\rho(B_1)=|B_1|$ and $\rho(B_2)=|B_2|$. 
Then, by (E), there exists $e \in H \setminus B_1$ 
such that $\rho(B_1 \cup \{e\})=\rho(B_1)+1=|B_1|+1=|B_1 \cup \{e\}|$. 
Thus we have $B_1 \cup \{e\} \in \mathcal{I}_{\rho}$ 
with $B_1 \cup \{e\} \subseteq H$. 
But this is a contradiction to the assumption that $B_1$ is a base of 
$\mathcal{I}_{\rho}^{(H)}$. 
Thus $\mathcal{I}_{\rho}$ satisfies (M). 
Hence $(E, \mathcal{I}_{\rho})$ is an $\mathcal{H}$-matroid. 
\end{proof}

\begin{proof}[Proof of Theorem \ref{th:main}]
It follows from Lemmas \ref{lem:UI}, 
\ref{clm:same}, 
\ref{clm:ExProp}, 
and \ref{clm:core}. 
\end{proof}

\begin{Rem}
\emph{Strict cg-matroids} which were 
introduced by 
S. Fujishige, G. A. Koshevoy, and Y. Sano \cite{fks} 
in 2007 
can be considered as $\mathcal{H}$-matroids $(E, \mathcal{I})$ 
where $\mathcal{H}$ is an abstract convex geometry 
and $\mathcal{I} \subseteq \mathcal{H}$. 
The rank functions $\rho:\mathcal{H} \to \mathbb{Z}_{\geq 0}$ 
of strict cg-matroids $(E, \mathcal{H}; \mathcal{I})$ 
are characterized in \cite{Sano1}. 
For more study on \emph{cg-matroids}, see \cite{cgmat2}. 
\end{Rem}

\begin{Rem}
Faigle and Fujishige gave a characterization of the 
rank functions 
$\mathcal{H}$-matroids when $\mathcal{H}$ is a closure space 
(see \cite[Theorem 5.1]{FF}).
\end{Rem}

\section*{Acknowledgments}

The author is grateful to the anonymous referees 
for careful reading and valuable comments. 
This work was supported by JSPS KAKENHI Grant Number 15K20885.


\end{document}